\newcommand{\N}{\mathbb{N}}
\newcommand{\R}{\mathbb{R}}
\newcommand{\Z}{\mathbb{Z}}
\newcommand{\COP}{{\mathcal{COP}}}
\newcommand{\CP}{{\mathcal{CP}}}
\newcommand{\Sym}{{\mathcal{S}}}
\newcommand{\Min}{\mathrm{Min}}
\newcommand{\interior}{\mathrm{int}\,}
\newcommand{\cone}{\mathrm{cone}\,}
\newcommand{\SPN}{\mathcal{SPN}}
\newcommand{\NonN}{\mathcal{N}}
\newcommand{\Ryshkov}{\mathcal{R}}
\newcommand{\MinCOP}{\mathrm{Min}_{\COP} \thinspace}
\newcommand{\minCOP}{\mathrm{min}_{\COP} \thinspace}
\theoremstyle{plain}
\newtheorem{thm}{Theorem}[section]
\newtheorem{lemma}[thm]{Lemma}
\newtheorem{coro}[thm]{Corollary}
\theoremstyle{definition}
\newtheorem{defi}[thm]{Definition}
\newtheorem{example}[thm]{Example}
\theoremstyle{remark}
\author{Alexander Oertel}
\address{A.~Oertel, Universit\"at Rostock, Institute of
  Mathematics, 18051 Rostock, Germany}
\email{alexander.oertel@uni-rostock.de}
\author{Achill Sch\"urmann}
\address{A.~Sch\"urmann, Universit\"at Rostock, Institute of
  Mathematics, 18051 Rostock, Germany}
\email{achill.schuermann@uni-rostock.de}
\date{September 27, 2025}
\subjclass{11H50, 11Y16, 15A23, 90C20}
\keywords{Copositive Minimum, Complexity, LDLT-decomposition}
\begin{document}

\title[On Computing the Copositive Minimum and its Representatives]{On Computing the Copositive Minimum and its Representatives}

\begin{abstract}
Computing the copositive minimum of a strictly copositive quadratic form is a natural generalization of computing the arithmetical minimum of a positive definite one.
In this paper we show that this generalized problem is NP-complete.
Moreover, we describe a practical method to calculate all shortest vectors using the LDLT-decomposition in a big class of special cases.
Our numerical tests show that our method performs significantly better than previous approaches.
\end{abstract}

\maketitle

\section{Introduction}
A classical problem in the geometry of numbers with multiple applications in numerous contexts is the calculation of shortest vectors of positive definite quadratic forms (or, equivalently, shortest vectors of a lattice).
More precisely, using matrix notation, the problem is for a positive definite matrix $Q$ to compute the \emph{arithmetical minimum}
\begin{align*}
 \min Q = \min_{z\in \Z^n \setminus \{0\} } Q[z]
 \end{align*}
and the (integral) vectors
\begin{align*}
 \mathrm{Min}\thinspace Q = \{ z \in \Z^n : Q[z] = \min Q \}
\end{align*}
attaining it.
Here we use the notation $Q[z] = z^T Q z$ for column vectors $z \in \R^n$.

We denote by $\Sym^n$ the space of real symmetric $n \times n$ matrices.
A matrix $Q \in \Sym^n$ is said to be \emph{copositive}, if $Q[x] \geq 0$ for all $x \in \R^n_{\geq0}$, and \emph{strictly copositive}, if it is copositive and $Q[x] = 0$ for $x \in \R^n_{\geq 0}$ only if $x = 0$.
For a matrix $Q \in \Sym^n$ the \emph{copositive minimum} is a natural generalization of the arithmetical minimum and defined as
\begin{align*}
 \minCOP Q := \min \left\{ Q[z] : z \in \Z^n_{\geq 0} \setminus \{0\} \right\}.
\end{align*}
The vectors attaining it are denoted by
\begin{align*}
 \MinCOP Q := \{ z \in \Z^n_{\geq 0} : Q[z] = \minCOP Q \}.
\end{align*}
We call the task of computing $\MinCOP Q$, i.e. enumerating all nonnegative shortest vectors, the \emph{MinCOP problem}.
Note that every calculation of a classical arithmetical minimum and the corresponding shortest vectors can be realized as a copositive minimum and representatives calculation \autocite[Theorem~6.1]{PerfectCopositive}.

These notations have been introduced in \autocite{CertificateAlgo}, in the context of computing certificates for complete copositivity of a given matrix, which we summarize briefly in \cref{sec:origin}.
The main bottleneck there is the computation of the copositive minimum.
The latter is deeply connected to the copositivity of $Q$.
\begin{lemma}\label{lemma:cop_min_strictly}
 For a matrix $Q \in \Sym^n$ we have
 \begin{itemize}
  \item[(i)]  $\minCOP Q > 0$ if and only if $Q$ is strictly copositive,
  \item[(ii)] $\minCOP Q = 0$ if and only if $Q$ is copositive, but not strictly so, or
  \item[(iii)] $\minCOP Q = -\infty$ if and only if $Q$ is not copositive.
 \end{itemize}
\end{lemma}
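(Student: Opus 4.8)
The plan is to split the statement into three implications, one per copositivity type, and prove each separately. First I would record the structural reduction: the three right-hand conditions partition $\Sym^n$ by definition, and the three left-hand values are likewise exhaustive and mutually exclusive, because $\minCOP Q$ is never a finite negative number --- if $Q[z_0]<0$ for some $z_0\in\Z^n_{\geq0}\setminus\{0\}$ then $Q[kz_0]=k^2Q[z_0]\to-\infty$, forcing $\minCOP Q=-\infty$. Hence it suffices to prove the three implications ``$Q$ has a given copositivity type $\Rightarrow\minCOP Q$ takes the matching value''; since the three copositivity classes partition $\Sym^n$ and so do the three value classes, these inclusions are automatically equalities and all six implications follow.

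For \emph{strictly copositive $\Rightarrow\minCOP Q>0$} I would use compactness: $x\mapsto Q[x]$ is continuous and attains a minimum $c$ on the compact nonempty set $\{x\in\R^n_{\geq0}:\|x\|=1\}$, and $c>0$ by strict copositivity, so by homogeneity $Q[x]\geq c\|x\|^2$ on all of $\R^n_{\geq0}$; since $\|z\|\geq1$ for $z\in\Z^n_{\geq0}\setminus\{0\}$, this yields $\minCOP Q\geq c>0$ (the minimum being attained, as $Q[z]\leq Q_{11}$ bounds $\|z\|$ and leaves only finitely many candidates). For \emph{not copositive $\Rightarrow\minCOP Q=-\infty$} I would take $y\in\R^n_{\geq0}$ with $Q[y]<0$, note that the open set $\{w\in\R^n:Q[w]<0\}$ contains $y$ and hence meets the open orthant $\R^n_{>0}$ (since $y$ lies in its closure), pick a rational point in this intersection, clear denominators to obtain $z\in\Z^n_{>0}$ with $Q[z]<0$, and conclude with the scaling argument from the first paragraph.

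The substantial case is \emph{copositive but not strictly $\Rightarrow\minCOP Q=0$}. Copositivity already gives $\minCOP Q\geq0$, so the task is to exhibit $z\in\Z^n_{\geq0}\setminus\{0\}$ with $Q[z]$ arbitrarily small, starting from some $x\in\R^n_{\geq0}\setminus\{0\}$ with $Q[x]=0$. The key preliminary step is a first-order identity: for each index $i$ in the support $S$ of $x$ the quadratic $t\mapsto Q[x+te_i]=2t(Qx)_i+t^2Q_{ii}$ is nonnegative for $|t|$ small (there $x+te_i\in\R^n_{\geq0}$) and vanishes at $t=0$, hence $(Qx)_i=0$; thus $\operatorname{supp}(Qx)\cap S=\emptyset$. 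I would then approximate the integer multiples $mx$, which all lie on the ray where $Q$ vanishes, by lattice points: by Dirichlet's theorem on simultaneous Diophantine approximation there are arbitrarily large $m\in\Z_{>0}$ with $\|mx-z\|_\infty\leq m^{-1/n}$, where $z\in\Z^n$ is the coordinatewise nearest-integer rounding of $mx$. For $m$ large one has $z\in\Z^n_{\geq0}\setminus\{0\}$, and the error $e=z-mx$ is supported inside $S$ (it vanishes wherever $x$ does), so $(Qx)^Te=0$ and
\begin{equation*}
 Q[z]=Q[mx+e]=m^2Q[x]+2m(Qx)^Te+Q[e]=Q[e]\longrightarrow0\quad\text{as }m\to\infty,
\end{equation*}
because $\|e\|_\infty\to0$. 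Hence $\minCOP Q\leq0$, so $\minCOP Q=0$.

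I expect this last case to be the main obstacle. The naive idea of approximating the (possibly irrational) zero $x$ by a nearby rational point and clearing denominators fails, since that scales $Q[\cdot]$ up rather than down; what rescues the argument is the identity $\operatorname{supp}(Qx)\cap\operatorname{supp}(x)=\emptyset$, which kills the linear cross term and reduces $Q[z]$ to $Q[e]$, and even then one needs a genuinely good simultaneous approximation, as plain nearest-integer rounding leaves an error of size $O(1)$. A secondary subtlety to flag is that in this case $\minCOP Q$ is typically not attained, so the ``$\min$'' in its definition has to be read as an infimum.
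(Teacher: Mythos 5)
Your proof is correct, but it takes a genuinely different route from the paper's. The paper disposes of the substantive part (i) by citation (Lemma~2.3 of the $\CP$-factorization paper and Lemma~2.2 of the certificate-algorithm paper), handles (iii) with the same scaling argument you use, and then gets (ii) for free by exhaustion. You instead give a fully self-contained argument, and your third case is in effect a proof of the cited lemmas: showing that a copositive but not strictly copositive $Q$ has copositive infimum $0$ is the contrapositive of the nontrivial direction of (i). The two ingredients you isolate --- the first-order condition $(Qx)_i=0$ for $i\in\operatorname{supp}(x)$ at a zero $x$ of a copositive form (which kills the cross term $2m(Qx)^Te$ that would otherwise grow like $m^{1-1/n}$), and Dirichlet's simultaneous approximation to force $Q[e]\to 0$ rather than merely $O(1)$ --- are exactly what is needed, and your exhaustion scheme in the first paragraph correctly reduces the six implications to three. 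Your closing caveat is also apt: in case (ii) the value $0$ need not be attained by an integral vector, so the ``$\min$'' must be read as an infimum there; this is consistent with the paper's remark that attainment is guaranteed only for strictly copositive matrices. What your approach buys is independence from the external references at the cost of length; what the paper's buys is brevity at the cost of self-containment.
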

\begin{proof}
 Part (i) follows from \autocite[Lemma~2.3]{CP_Factorization_First} and \autocite[Lemma~2.2]{CertificateAlgo}.
 If $z \in \Z_{\geq 0}^n$ with $Q[z] < 0$, then $Q[\alpha z] = \alpha^2 Q[z] \to -\infty$ for $\alpha \to \infty$ with $\alpha \in \Z$, thus (iii).
 The remaining part (ii) follows from exhaustion of all cases.
\end{proof}
Lemma 2.2 of \autocite{CertificateAlgo} also yields that the copositive minimum of a strictly copositive matrix is attained, but attained only by finitely many vectors.
Strictly copositive matrices are thus a natural domain for the copositive minimum, though for mostly technical reasons we allow general symmetric matrices as an input.

We give a short example which we pick up repeatedly.
\begin{example}\label{ex:cop}
 The matrix
\begin{align*}
 Q = \begin{pmatrix}
      3 & -1 & 3 \\
      -1 & 2 & -1 \\
      3 & -1 & 2
     \end{pmatrix}
  \end{align*}
  is strictly copositive, since
  \begin{align*}
   Q[x] = 2 x_1^2 + (x_1 - x_2)^2 + 6x_1 x_3 + (x_2 - x_3)^2 + x_3^2.
  \end{align*}
  A quick $\mathrm{mod} \thinspace 2$ argument shows that $Q[z] > 1$ for $z \in \Z^n_{\geq 0} \setminus \{0\}$. Then we have $\minCOP Q = 2$ and
  \begin{align*}
   \MinCOP Q = \left\{ \begin{pmatrix}
                          0 \\ 1 \\ 0
                         \end{pmatrix}, \,
                         \begin{pmatrix}
                          0 \\ 1 \\ 1
                         \end{pmatrix}, \,
                         \begin{pmatrix}
                          0 \\ 0 \\ 1
                         \end{pmatrix}
                          \right\}
  \end{align*}
  are the vectors attaining the copositive minimum.
  \end{example}

    The computational ansatz for MinCOP utilized in \parencite{CertificateAlgo} is based on a simplicial cone partition of the nonnegative orthant such that $v^T Q w > 0$ for each pair $v$, $w$ of extreme ray generators of a common simplicial cone in $\R_{\geq 0}^n$.
    This simplicial subdivision of $\R_{\geq 0}^n$ was first introduced by Dür und Bundfuss \autocite{BundfussDuer} in the context of testing for copositivity.
  Intuitively, the condition translates to the matrices being nonnegative with respect to each cone separately and in each cone a Fincke-Pohst-type algorithm (see \cref{sec:svp_fincke}) is performed.
  This elegant approach has the drawback that some matrices require a seemingly large number of cones.
  For this reason there is much performance improvement to be gained by finding alternative ways to calculate the copositive minimum.

Copositivity itself has numerous applications and connections to other parts of mathematics, cf. \autocite[Section~2.1]{CopositiveAndCompletelyPositiveMatrices}.
Arguably the most important one is copositive optimization.
Many NP-complete problems have reformulations in this context and the already mentioned certificates are a central building block here for verifiable computer proofs, which are becoming more and more important.
For details on copositive optimization we refer to the surveys \autocite{DuerCopOptSurvey} and \autocite{3e5818f03f824fc9b887acab6834a300}.

Our paper is structured as follows.
We start in \cref{sec:Complexity} by establishing that (a decision variant of) the copositive minimum is NP-complete.
In \cref{sec:Preliminaries} the necessary background information about symmetric matrix cones, the origins of the copositive minimum, the Fincke-Pohst algorithm for the shortest vector problem, and the LDLT-decomposition is then reviewed.
Next, in \cref{sec:Main} the LDLT-decomposition is combined with the Fincke-Pohst algorithm to attack MinCOP.
In particular, we discuss how the decomposition should be used and introduce the concept of \emph{difficult coordinates}, which is central for our new algorithm.
In \cref{sec:special_cases} we describe how our method may solve multiple special cases, such as the case of one difficult coordinate, semidefinite matrices, and when the matrix is decomposable into a positive semidefinite and nonnegative part.
Our method is then analyzed numerically in \cref{sec:appl_and_num_res}.
Finally, several open questions and possible directions for further research are discussed in \cref{sec:open_questions}.
\section{Complexity of the Copositive Minimum}\label{sec:Complexity}
In this section we prove that a decision variant of the copositive minimum is NP-complete by reduction of the \emph{Subset Sum Problem} (see \autocite{GareyJohnson}) to it.
The analogue \emph{shortest vector problem (SVP)} is known to be NP-hard under randomized reductions \autocite{10.1145/276698.276705}, while NP-hardness under deterministic reductions is still an open problem.
Its complexity in general is still an active research topic (cf. e.g. the recent survey \autocite{SurveyBennett}).
We remark that the rather small symmetry group of the copositive cone (see \cref{sec:pre_matrix_cones}) induces a lack of (a comparable) reduction theory for copositive matrices.
This hints that calculating the copositive minimum has a much harder practical difficulty, which we also observe in practice.
\begin{defi}[Copositive Minimum -- Decision Variant]
 Given a strictly copositive matrix $Q$ with integer entries and a positive number $\lambda$, decide if there is $z \in \Z_{\geq 0}^n \setminus \{0\}$ with $Q[z] \leq \lambda$.
\end{defi}
\begin{defi}[Subset Sum Problem]
 Given a vector $a$ of dimension $n$, consisting of positive integers and a target positive integer $s$, decide if there is $x \in \{0, 1\}^n$ such that $a^T x = s$.
\end{defi}
The proof of the NP-completeness of the Subset Sum Problem given in \autocite[Thm.~8.23]{AlgorithmDesign} yields that this variant of the subset sum problem (with only positive integers) is NP-complete.
\begin{thm}
 The decision variant of the copositive minimum is NP-complete.
\end{thm}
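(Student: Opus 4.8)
The plan is to prove both that the problem lies in NP and that it is NP-hard. For membership I would take as certificate a vector $z\in\Z^n_{\ge0}\setminus\{0\}$ with $Q[z]\le\lambda$; checking this (together with $z\ge0$, $z\ne0$) is a polynomial-time matrix--vector evaluation, so the only real issue is to bound the bit-size of a smallest such $z$ polynomially in the size of $(Q,\lambda)$. Here I would use strict copositivity: by compactness of the standard simplex $\{x\in\R^n_{\ge0}:\mathbf{1}^Tx=1\}$ there is $c>0$ with $Q[x]\ge c\,(\mathbf{1}^Tx)^2$ for all $x\in\R^n_{\ge0}$ (writing $x=(\mathbf{1}^Tx)\bar x$ with $\bar x$ in the simplex), so any solution satisfies $\mathbf{1}^Tz\le\sqrt{\lambda/c}$. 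It then remains to see that $1/c$ is at most exponential in the input size, which I would get from the fact that the minimizer of the quadratic program $\min\{Q[x]:x\in\R^n_{\ge0},\ \mathbf{1}^Tx=1\}$ is attained on a face of the simplex and, on the affine hull of that face, satisfies the first-order (KKT) conditions --- a linear system with integer coefficients polynomial in $(Q,\lambda)$ --- hence is rational with denominator bounded exponentially in the input size by Cramer's rule and Hadamard's bound, and so is $c$.

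For hardness I would reduce from the Subset Sum Problem with positive integers, which is NP-complete as noted above. Given $a\in\Z^n_{>0}$ and $s\in\Z_{>0}$, introduce a homogenizing coordinate $z_0$ and define $Q\in\Sym^{n+1}$ by
\[
 Q[z] \;=\; \alpha\Bigl(\sum_{i=1}^n a_iz_i - s\,z_0\Bigr)^2 + \beta\sum_{i=1}^n z_i(z_i-z_0) + \gamma\,z_0^2
\]
for positive integers $\alpha,\beta,\gamma$ to be fixed. Completing squares gives $\beta\sum_i z_i(z_i-z_0)+\gamma z_0^2=\beta\sum_i(z_i-z_0/2)^2+(\gamma-\tfrac{\beta n}{4})z_0^2$, so for $\gamma>\tfrac{\beta n}{4}$ the form $Q$ is a positive combination of squares vanishing only at $0$; thus $Q$ is strictly copositive (in fact positive definite). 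Taking $\beta$ even makes $Q$ integral --- e.g.\ $\alpha=\gamma=n$, $\beta=2$ --- and I would set $\lambda:=\gamma=n$; the reduction is obviously polynomial-time.

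Finally I would verify correctness. If $a^Tx=s$ for some $x\in\{0,1\}^n$, then $z:=(1,x_1,\dots,x_n)\in\Z^{n+1}_{\ge0}\setminus\{0\}$ satisfies $Q[z]=\gamma=\lambda$, a yes-instance. Conversely, assuming no such $x$ exists, I would show $Q[z]>\lambda$ for all $z\in\Z^{n+1}_{\ge0}\setminus\{0\}$ by cases on $z_0$: if $z_0=0$, some $z_i\ge1$ forces $Q[z]\ge\alpha+\beta>\gamma$; if $z_0=1$, then either some $z_i\ge2$, whence $\sum_i z_i(z_i-1)\ge2$ and $Q[z]\ge2\beta+\gamma>\gamma$, or $(z_1,\dots,z_n)\in\{0,1\}^n$ is not a subset-sum solution so $(\sum_i a_iz_i-s)^2\ge1$ and $Q[z]\ge\alpha+\gamma>\gamma$; and if $z_0\ge2$, already $(\gamma-\tfrac{\beta n}{4})z_0^2\ge4(\gamma-\tfrac{\beta n}{4})>\gamma$ (using $\gamma>\tfrac{\beta n}{3}$) while the other terms are nonnegative. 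The stated constants satisfy all the required inequalities. I expect the main obstacle to be twofold: the bit-length bound behind membership in NP (equivalently, a quantitative lower bound on the strict-copositivity constant $c$), and, in the reduction, the choice of constants forcing the homogenizing variable to behave --- in particular ruling out small-value vectors with $z_0\ge2$ --- which is exactly the delicate point that determines the admissible sizes of $\alpha,\beta,\gamma$.
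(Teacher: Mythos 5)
Your proof is correct and takes essentially the same route as the paper's: the same reduction from Subset Sum via a homogenizing coordinate with literally the same constants ($\alpha=\gamma=n$, $\beta=2$, target value $n$), the same sum-of-squares certificate of strict copositivity, and the same three-case analysis on the homogenizing variable ($z_0=0$, $z_0=1$, $z_0\ge 2$). The only difference is that you are more careful about NP membership by bounding the bit-size of a certificate vector via the strict-copositivity constant, a point the paper's proof leaves implicit.
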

\begin{proof}
Note that a certificate for $\minCOP Q \leq \lambda$ would be a suitable vector $z \in \Z_{\geq 0}^n \setminus \{0\}$: We can calculate $Q[z]$ in polynomial time and check the inequality in polynomial time as well.

Let $(a;s)$ be an instance of the subset sum problem.
Define
\begin{equation*}
 Q := n \begin{pmatrix}
         a a^T & -sa \\
         -sa^T & s^2
        \end{pmatrix} +
        \begin{pmatrix}
         2 I_n & -\boldsymbol{1}_n \\
         - \boldsymbol{1}_n^T & n
        \end{pmatrix},
\end{equation*}
with $I_n$ being the identity matrix of dimension $n$ and $\boldsymbol{1}_n$ the all-ones column vector of dimension $n$.
The matrix $Q$ can be calculated in polynomial time in the input length.

A later part of the proof will yield that $Q[y] \geq n$ for all $y \in \Z_{\geq 0}^n\setminus \{0\}$ from which we deduce $Q \in \interior \COP^{n+1}$ by \cref{lemma:cop_min_strictly}.
Alternatively, this follows from
\begin{equation*}
 Q = H^T H + \begin{pmatrix}
              I_n & \\
              & 0
             \end{pmatrix}
\end{equation*}
for
\begin{equation*}
 H = \begin{pmatrix}
      \sqrt{n} a^T & -\sqrt{n} s \\
      -I_n & \boldsymbol{1}_n
     \end{pmatrix}.
\end{equation*}

We write $y = \left( \begin{smallmatrix} x \\ t \end{smallmatrix} \right)$ with $x \in \R^n$ and $t \in \R$.
Then we have
\begin{equation*}
 Q[y] = n \left( (a^T x)^2 - 2sta^T x + t^2 s^2 \right) + 2x^T x - 2t \boldsymbol{1}_n^T x + t^2 n.
\end{equation*}
We also abbreviate
\begin{equation*}
 h_t(x) := (a^T x)^2 - 2sta^T x + t^2 s^2 = (a^T x - ts)^2 \geq 0.
\end{equation*}

Now we show that the subset sum problem has a solution $x\in \{0, 1\}^n$ if and only if the copositive minimum of $Q$ is less than or equal to $n$.

If $x \in \{0, 1\}^n$ is a solution to the subset sum problem, i.e. $a^T x = s$, then
\begin{align*}
 Q\left[ \begin{pmatrix} x \\ 1 \end{pmatrix}\right] &= n (s^2 - 2s^2 + s^2) + 2 x^T x - 2 \boldsymbol{1}_n^T x + n \\
 &= 2  \sum_{i = 1}^n (x_i^2 - x_i) + n = n,
\end{align*}
since $x_i^2 - x_i = 0$ for $x_i \in \{0, 1\}$.

In the other direction, we show that if the subset problem has no solution, then the copositive minimum of $Q$ is strictly greater than $n$ (and is in general greater than $n$).
To that matter, we consider three cases for the last coordinate $t$ of $y \in \Z_{\geq 0}^{n+1} \setminus \{0\}$.

\emph{Case 1:} $t = y_{n+1} \geq 2$.
Since $x_i^2 - tx_i \geq -\frac{t^2}{4}$, we can bound
\begin{align*}
 Q[y] &= n h_t(x) + 2 x^T x - 2t \boldsymbol{1}_n^T x + t^2 n \\
 &=n h_t(x) + 2 \sum_{i=1}^n (x_i^2 - t x_i) + t^2 n \\
 &\geq n h_t(x) + 2n \frac{-t^2}{4} + t^2 n \\
 &= n h_t(x) + \frac{t^2}{2} n > n.
\end{align*}

\emph{Case 2:} $t = y_{n+1} = 1$.
We have
\begin{equation*}
 Q[y] = n h_1(x) + 2 \sum_{i=1}^n (x_i^2 - x_i) + n.
\end{equation*}
If $x \in \{0,1\}^n$, then $h_1(x) > 0$, because there is no solution for the subset sum problem.
If $x \notin \{0, 1\}^n$, then $\sum_{i=1}^n (x_i^2 - x_i) > 0$.
So in total, $Q[y] > n$.

\emph{Case 3:} $t = y_{n+1} = 0$.
Then $x \neq 0$ and thus $a^T x \geq 1$.
In particular,
\begin{equation*}
 Q[y] = n (a^T x)^2 + 2 x^T x > n. \qedhere
\end{equation*}
\end{proof}
\section{Preliminaries and Notations for Our Algorithms}\label{sec:Preliminaries}
\subsection{Symmetric Matrix Cones}\label{sec:pre_matrix_cones}
Throughout the paper we equip the space $\Sym^n$ with the standard trace scalar product $\langle \cdot, \cdot \rangle$.
In this section we introduce different convex cones contained in $\Sym^n$ and refer to \autocite{CopositiveAndCompletelyPositiveMatrices} for a thorough treatment.
Arguably the most important convex cone contained in $\Sym^n$ is the \emph{positive semidefinite cone} $\Sym_{\succcurlyeq 0}^n$ consisting of the positive semidefinite matrices of dimension $n$.
The cone is generated by the rank $1$ matrices, i.e.
\begin{equation*}
 \Sym_{\succcurlyeq 0}^n = \cone \{ xx^T : x \in \R^n \}
\end{equation*}
and is selfdual, meaning that
\begin{equation*}
 (\Sym_{\succcurlyeq 0}^n)^D = \{Q \in \Sym^n : \left\langle Q, X \right \rangle \geq 0 \text{ for all } X \in \Sym_{\succcurlyeq 0}^n \} = \Sym_{\succcurlyeq 0}^n.
\end{equation*}

Another cone that has been investigated for a long time is the \emph{completely positive cone}
\begin{equation*}
 \CP^n = \cone\{ x x^T : x \in \R_{\geq 0}^n\}.
\end{equation*}
See \autocite[Section~3.1]{CopositiveAndCompletelyPositiveMatrices} for a short survey on the numerous applications of the cone.
Its dual cone is the \emph{copositive cone}
\begin{align*}
 \COP^n &=\{Q \in \Sym^n : Q[x] \geq 0 \text{ for all } x \in \R_{\geq 0}^n\} \\
 &= \{Q \in \Sym^n : \left\langle Q, X \right\rangle \geq 0 \text{ for all } X \in \CP^n \}.
\end{align*}
Its interior $\interior \COP^n$ is precisely made up of the strictly copositive matrices \autocite[Theorem~2.25]{CopositiveAndCompletelyPositiveMatrices}.
As seen in \cref{lemma:cop_min_strictly}, a matrix $Q \in \COP^n$ is strictly copositive if its copositive minimum is strictly greater than zero.
The automorphisms $\Phi$ preserving $\COP^n$ are of the form
\begin{equation*}
 \Phi(Q) = (PD)^T Q PD
\end{equation*}
with a permutation matrix $P$ and a diagonal matrix $D$ with only positive entries \autocite{Automorphisms1, Automorphisms2}.
Matrices of the form $PD$ are called \emph{monomial}.

The copositive cone contains $\Sym_{\succcurlyeq 0}^n$ and also the nonnegative cone $\NonN^n$ of all matrices with nonnegative entries.
Therefore, we also have
\begin{equation*}
 \SPN^n = \Sym_{\succcurlyeq 0}^n + \NonN^n \subseteq \COP^n.
\end{equation*}
Up to $n = 4$ the cones $\SPN^n$ and $\COP^n$ coincide \autocite{Diananda_1962}.
For $n \geq 5$ the matrices in $\COP^n \setminus \SPN^n$ are called \emph{exceptional} and a standard example is the so called \emph{Horn matrix} (see \autocite{Diananda_1962} for its first appearance).
Some information regarding the relative size of $\SPN^n$ in $\COP^n$ can be found in \autocite{Klep, ConstructionExceptional}.

The dual cone of $\SPN^n$ is the cone of the doubly nonnegative matrices
\begin{equation*}
 \mathcal{DNN} = \Sym_{\succcurlyeq 0}^n \cap \NonN^n,
\end{equation*}
which we list here for completeness only.

  \subsection{Certificates for Complete Positivity}\label{sec:origin}
  In this section we sketch how the copositive minimum originated in \autocite{CertificateAlgo} from the study of $\CP$-factorizations.
  In doing so we also introduce the \emph{perfect copositive matrices}, which are an important subclass of copositive matrices to test our algorithms against.
  This section is not strictly necessary to understand the algorithms themselves.

  The usual definition for a matrix to be completely positive, i.e. $Q \in \CP^n$, is that there exists a nonnegative matrix $B$ (that is, $B$ has nonnegative entries) such that $Q = B^T B$.
  An equivalent condition is the existence of a $\CP$-factorization
  \begin{equation*}
   Q = \sum_{i = 1}^m \alpha_i x_i x_i^T \text{ with } m \in \N,\, \alpha_i \geq 0, \, x_i \in \R_{\geq 0}^n \text{ for } i = 1, \ldots, m.
  \end{equation*}
  Such a $\CP$-factorization is called \emph{rational}, if the coefficients $\alpha_i$ and the (components of the) vectors $x_i$ are rational for all $i$.
  It has been shown that deciding whether $Q \in \CP^n$ is NP-hard~\autocite{Dickinson2014}.

  Nonetheless, in some important contexts, such as computer proofs based on results from copositive optimization, it is essential to find certificates for (non-) complete positivity (cf. \autocite{BermanDuerShakedMondererOpenQuestions}).

  In \autocite{CertificateAlgo} a simplex type algorithm for rational $\CP$-factorizations is described and in this context the notion of the copositive minimum is introduced.
  It is used to define the \emph{copositive Ryshkov polyhedron}
  \begin{equation*}
   \Ryshkov := \{ Q \in \Sym^n : \minCOP Q \geq 1\},
  \end{equation*}
  which is a so called locally finite polyhedron (see \autocite{CompGeometryForms} for details about locally finite polyhedra).
  It is on the edge graph of this set where the simplex type algorithm is performed.

  In analogy to the classical theory of perfect quadratic forms, we call copositive matrices \emph{perfect copositive} if they are uniquely determined by their copositive minimum and its representatives.
  In particular, the vertices of $\Ryshkov$ are precisely the perfect copositive matrices with copositive minimum equal to $1$.

  The algorithm now walks along a path of edges in $\Ryshkov$, visiting neighboring perfect copositive matrices $B^{(k)}$ (vertices of $\Ryshkov$), until $Q$ is either contained in the (inner) normal cone
  \begin{equation*}
   \mathcal{V} (B^{(k)}) = \cone \left\{v v^T : v \in \MinCOP B^{(k)} \right\}
  \end{equation*}
  (dubbed \emph{Voronoi cone}) or a perfect copositive matrix $B^{(k)}$ acts as a separating witness, i.e. is a certificate for non-complete positivity via $\langle Q, B^{(k)} \rangle < 0$.

  Finding the next contiguous perfect copositive matrix in $\Ryshkov$ requires a large number of copositive minimum calculations:
  For a given perfect copositive matrix $Q$ one drops (at least) one minimal vector, obtains a ray on which the next perfect copositive matrix has to be and performs a search on the ray.
  The copositive minimum for each matrix on the ray then yields the information whether one is already beyond the next perfect copositive matrix or not.

  Note that there are some other algorithms for checking for complete positivity.
  Some recent ones are by Lai and Yoshise~\autocite{Lai2022} and Badenbroek and de Klerk~\autocite{badenbroek2020analyticcentercuttingplane}.
  In these references one can also find further references to other approaches.
  See also~\autocite{CertificateAlgo} for a division into numerical and theoretical algorithms.
  For more information on perfect copositive matrices we refer to \autocite{PerfectCopositive}, where first theoretical investigations into these matrices are given.

\subsection{Shortest Vector Problem and Fincke-Pohst-Algorithm}\label{sec:svp_fincke}
Given a lattice~$\Lambda$ (in the sense of an additive and discrete subgroup of an Euclidean vector space) with a fixed basis we denote its positive definite Gram matrix by $Q$.
The shortest vector problem now asks for the minimal length of a non-zero lattice vector.
We refer to the survey \autocite{YasudaSurvey} for information about the problem, in particular how it is solved.
See also \autocite{MicciancioWalterEnumeration}.
Since we are interested in all shortest vectors for MinCOP, we describe in this section the core idea behind enumeration algorithms.

The usual approach to enumerate all shortest vectors is to shift the viewpoint towards the positive definite quadratic form~$Q$, where the lattice minimum translates to the (square root of the) arithmetical minimum and the minimal vectors of~$Q$ correspond to coefficients of linear combinations giving short lattice vectors.

Next we describe the rough idea of the Fincke-Pohst algorithm \autocite{FinckePohst}, which is the usual way to compute $\Min \thinspace Q$ and also the fundamental idea we build upon in this paper.

The goal of the Fincke-Pohst algorithm for a given positive definite matrix~$Q$ and a constant $\lambda > 0$ is to enumerate all integral points inside the ellipsoid
\begin{equation*}
 \{x \in \R^n : Q[x] \leq \lambda \}.
\end{equation*}
The tool to do so is the \emph{Lagrange expansion}
\begin{equation*}
 Q[x] = \sum_{i = 1}^n A_i \left( x_i + \sum_{j = i + 1}^n u_{ij} x_j \right)^2
\end{equation*}
of $Q[x]$ with the \emph{outer coefficients} $A_i > 0$ and the \emph{inner coefficients} $u_{ij} \in \R$.

This expansion is equivalent to the Cholesky-decomposition of $Q$.
If $Q$ is given as the Gram matrix of some basis $B = (b_1, \ldots, b_n) \in \R^{n\times n}$, then there is also a simple geometric interpretation involving the Gram-Schmidt-orthogonalization $\bar{B} = (\bar{b}_1, \ldots, \bar{b}_n)$.
If $B$ is given as $B = \bar{B} U$, then
\begin{equation*}
 Q[x] = x^T B^T B x = (Ux)^T \bar{B}^T \bar{B} (Ux).
\end{equation*}
In particular, the outer coefficients $A_i$ are the squared lengths of the $\bar{b}_i$.
The inner coefficients are given as
\begin{equation*}
 u_{ij} = \frac{\langle b_i, \bar{b}_j\rangle}{\langle \bar{b}_j, \bar{b}_j \rangle}
\end{equation*}
and describe the ``relative length'' of the projection of $b_i$ onto the line spanned by~$\bar{b}_j$.

This special structure allows for a recursive algorithm, in which each coordinate is subsequently bounded.
More precisely, if $Q[x] \leq \lambda$ is wanted, then the inequality $Q[x] \geq A_n x_n^2$ implies
\begin{equation*}
 - \sqrt{\frac{\lambda}{A_n}}\leq x_n \leq \sqrt{\frac{\lambda}{A_n}}.
\end{equation*}
Since $x$ is to be integral, there are only finitely many values $x_n$ can take.
If we fix $x_n$ to any such value, the inequality
\begin{equation*}
 \lambda - A_n x_n^2 \geq A_{n-1} ( x_{n-1} + u_{n-1\, n}\ x_n)^2
\end{equation*}
yields a range in which $x_{n-1}$ has to be in.

In the same way one can obtain bounds for each coordinate $x_i$ if $x_{i+1}, \ldots, x_n$ are fixed.
This yields a search tree overall, whose leaves (at the lowest level) correspond to the desired integral vectors.
When only the shortest vectors are of interest, one can lower the constant $\lambda$ with each new leaf corresponding to a shorter vector, effectively pruning the search tree.
\subsection{LDLT-Decomposition}\label{sec:ldlt}
In this subsection we describe for a given symmetric matrix its \emph{LDLT-decomposition}, which is a generalization of the Cholesky-decomposition and also the central tool in the remainder of this paper.
For self-containment we also describe how to calculate the decomposition as far as we need it.
\begin{defi}
 Let $Q \in \Sym^n$. An \emph{LDLT-decomposition} of $Q$ is a decomposition
 \begin{align*}
  Q = L D L^T,
 \end{align*}
 with a lower triangular matrix $L$ with unit diagonal and a block diagonal matrix~$D$, where the blocks are at most of size $2$.
\end{defi}
A classical application of the decomposition is the calculation of the inertia of a symmetric matrix and solving linear symmetric systems \autocite{BunchKaufman, BunchParlett}.
For a treatment other than the just mentioned references, see e.g. \autocite[Chapter~4]{MatrixComputations}.
It is a fact that for a symmetric matrix $Q \in \Sym^n$ there exists a permutation matrix $P$, such that the symmetric permutation $P^T Q P$ of $Q$ has an LDLT-decomposition.
The use of $2\times2$ blocks is necessary both for existence and numerical stability, with the standard example for a matrix without an LDLT-decomposition with a proper diagonal matrix being
\begin{equation*}
 \begin{pmatrix}
 0 & 1 \\
 1 & 0
 \end{pmatrix}.
\end{equation*}
See also \cref{sec:NoBlocks}.
Positive semidefinite matrices, however, always admit an LDLT-decomposition with no permutation needed (see e.g. \autocite[Theorem~2.2.6]{SemidefiniteApproachGraphRealization}).

We omit the details here, because for our application only LDLT-decompositions with a proper diagonal matrix $D$ are (currently) interesting to us.
With such decompositions, we obtain a \emph{Lagrange-type expansion}
\begin{align}
 Q[x] =  \sum_{i = 1}^n D_{ii} \left(x_i + \sum_{j = i + 1}^n u_{ij} x_j \right)^2, \label{eq:lagrange_type}
\end{align}
with $(u_{ij}) := U := L^T$, with the important difference that the \emph{outer} coefficients $D_{ii}$ may be zero or negative.
We can tolerate the drawbacks associated with disallowing $2\times2$ blocks in the diagonal matrix, because in \cref{sec:NoBlocks} we describe a strategy that deals with nonexistence.
All (relevant) calculations can be made in exact (rational) arithmetic so that numerical instability is not an issue either.

Next we describe next how to perform an LDLT-step (with proper diagonal matrix).
We partition the matrix $Q$ into
\begin{align*}
 Q = \begin{pmatrix}
      E & C^T \\
      C & B
     \end{pmatrix},
\end{align*}
with $E = (A_{11})$, $C \in \R^{(n-1) \times 1}$, and $B \in \R^{(n-1) \times (n-1)}$.
We then have $Q = L A L^T$ with
\begin{align*}
 L = \begin{pmatrix}
      1 & 0 \\
      C E^{-1} & I_{n-1}
     \end{pmatrix} \text{ and }
     A = \begin{pmatrix}
          E & 0 \\
          0 & B - C E^{-1} C^T
         \end{pmatrix},
\end{align*}
provided that $E \neq 0$.
We note that this step is compatible with permutations, which fix the first coordinate:
If we have
\begin{align*}
 P = \begin{pmatrix}
      1 & \\
      & \tilde{P}
     \end{pmatrix}
\end{align*}
with $\tilde{P}$ being a $(n-1)\times(n-1)$-permutation matrix, we have that applying the step to $P^T Q P$ yields $\tilde{P}^T (B - C E^{-1} C^T )\tilde{P}$ as the remainder.

This one step is equivalent to \emph{completing the square} for the first coordinate, for example
\begin{align}\label{eq:completing_square}
 a x_1^2 + 2b x_1 x_2 + c x_2^2 &= a \left( x_1 + \frac{b}{a}x_2 \right)^2 + \left( c - \frac{b^2}{a} \right) x_2^2.
\end{align}

One can then iterate the step with the remaining matrix $B - C E^{-1} C^T$ to obtain an LDLT-decomposition of $Q$.
\begin{example}\label{ex:ldlt}
 The matrix $Q$ from \cref{ex:cop} has the LDLT-decomposition
\begin{align*}
 Q =         \begin{pmatrix}
         1 & 0 & 0 \\
         -\frac{1}{3} & 1 & 0 \\
         1 & 0 & 1
        \end{pmatrix} \,
        \begin{pmatrix}
         3 & 0 & 0 \\
         0 & \frac{5}{3} & 0 \\
         0 & 0 & -1
        \end{pmatrix} \,
        \begin{pmatrix}
         1 & 0 & 0 \\
         -\frac{1}{3} & 1 & 0 \\
         1 & 0 & 1
        \end{pmatrix}^T.
\end{align*}
\end{example}

\section{Applying LDLT to the MinCOP Problem}\label{sec:Main}
\subsection{Difficult Coordinates}
Let in this section $Q \in \Sym^n$ and let $Q^{(k)} \in \Sym^k$ be the leading principal submatrix of size $k$.
Knowing an LDLT-decomposition of $Q$ means knowing it for $Q^{(k)}$ as well.
\begin{lemma}\label{lemma:pd_submatrix}
Suppose $Q = L D L^T$ is an LDLT-decomposition of $Q$.
Then for $1 \leq k \leq n$ and
$x = (x_1 \, \ldots \, x_k)^T$
the Lagrange expansion
\begin{align*}
  Q^{(k)} [x] = \sum_{i = 1}^k D_{ii} \left( x_i + \sum_{j = i + 1}^k u_{ij} x_j \right)^2,
\end{align*}
with $U = (u_{ij}) = L^T$, holds.
\end{lemma}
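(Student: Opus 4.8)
The plan is to reduce the statement to the Lagrange-type expansion \eqref{eq:lagrange_type}, which has already been established for a square matrix with an LDLT-decomposition with proper diagonal part, applied to the smaller matrix $Q^{(k)}$. The one thing that needs to be checked is that truncating an LDLT-decomposition to the leading $k\times k$ block again produces an LDLT-decomposition, this time of $Q^{(k)}$.

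First I would split the index set into the first $k$ and the last $n-k$ coordinates and write $L$ and $D$ in the corresponding block form
\[
 L = \begin{pmatrix} L^{(k)} & 0 \\ L_{21} & L_{22} \end{pmatrix}, \qquad
 D = \begin{pmatrix} D^{(k)} & 0 \\ 0 & D_{22} \end{pmatrix},
\]
where $L^{(k)}$ and $D^{(k)}$ are the leading $k\times k$ submatrices of $L$ and $D$. Here the upper right block of $L$ vanishes because $L$ is lower triangular, and the off-diagonal blocks of $D$ vanish because $D$ is a proper diagonal matrix (as is implicit in the form of the expansion), so that the truncation at $k$ does not cut through a block; in particular $L^{(k)}$ is again lower triangular with unit diagonal. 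Multiplying out $LDL^T$ in this block form and reading off the top left block yields $Q^{(k)} = L^{(k)} D^{(k)} (L^{(k)})^T$. Equivalently, one can argue entrywise: for $i,j \leq k$ the entry $(LDL^T)_{ij} = \sum_{a,b} L_{ia} D_{ab} L_{jb}$ only receives contributions from indices $a \leq i \leq k$ and $b \leq j \leq k$ since $L$ is lower triangular, which again identifies it with $(L^{(k)} D^{(k)} (L^{(k)})^T)_{ij}$. Thus $L^{(k)} D^{(k)} (L^{(k)})^T$ is an LDLT-decomposition of $Q^{(k)}$.

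Next I would simply apply the expansion \eqref{eq:lagrange_type} to $Q^{(k)}$ with this decomposition. Its outer coefficients are the diagonal entries $D^{(k)}_{ii} = D_{ii}$ for $1 \leq i \leq k$, and its matrix of inner coefficients is $(L^{(k)})^T$, whose $(i,j)$ entry for $i < j \leq k$ equals $L_{ji} = (L^T)_{ij} = u_{ij}$. Substituting these values gives precisely
\[
 Q^{(k)}[x] = \sum_{i=1}^k D_{ii} \left( x_i + \sum_{j=i+1}^k u_{ij} x_j \right)^2,
\]
where the inner sum now terminates at $k$ because $(L^{(k)})^T$ has no rows beyond index $k$.

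I do not expect a genuine obstacle; the only care needed is bookkeeping with the truncated index ranges and with the transpose relating $L$ and $U$. If one wished to allow genuine $2\times2$ blocks in $D$, the same reasoning goes through as long as the truncation at $k$ does not split a $2\times2$ block; for the proper-diagonal decompositions used throughout this paper this is automatic, so I would state the lemma in that setting.
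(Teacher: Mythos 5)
Your proof is correct, but it takes a different route from the paper. The paper's proof is a one-line substitution argument: it evaluates the already-established Lagrange expansion \eqref{eq:lagrange_type} of $Q$ at the padded vector $(x_1\,\ldots\,x_k\,0\,\ldots\,0)^T$, notes that this quantity equals $Q^{(k)}[x]$ by definition, and observes that all terms with index $i>k$ and all inner summands with $j>k$ vanish, so the expansion truncates to the claimed formula. You instead truncate the factorization itself, showing by block multiplication that $Q^{(k)} = L^{(k)} D^{(k)} (L^{(k)})^T$ is again an LDLT-decomposition, and then apply \eqref{eq:lagrange_type} to $Q^{(k)}$. Both arguments are sound; the paper's avoids any matrix bookkeeping, while yours has the advantage of explicitly justifying the sentence preceding the lemma (``Knowing an LDLT-decomposition of $Q$ means knowing it for $Q^{(k)}$ as well'') and of making transparent why the inner coefficients $u_{ij}$ for $i,j\leq k$ are unchanged. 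Your closing remark about $2\times 2$ blocks is a reasonable aside, though the lemma as used in the paper is confined to proper diagonal $D$, where the issue does not arise.
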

\begin{proof}
 By definition we have
 \begin{align*}
  Q[(x_1 \, \ldots \, x_k \, 0 \, \ldots \, 0)^T] = \sum_{i,j  = 0}^k q_{ij} x_i x_j = Q^{(k)}[x]
 \end{align*}
 and by the Lagrange expansion \cref{eq:lagrange_type} of $Q$ we have
 \begin{align*}
  Q[(x_1 \, \ldots \, x_k \, 0 \, \ldots \, 0)^T] &= \sum_{i = 1}^k D_{ii} \left( x_i + \sum_{j = i + 1}^k u_{ij} x_j \right)^2. \qedhere
 \end{align*}
\end{proof}
\begin{coro}
 Let $1\leq k \leq n$ and assume $D_{ii} > 0$ for $1 \leq i \leq k$.
 Then $Q^{(k)}$ is positive definite.
\end{coro}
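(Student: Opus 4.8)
The plan is to read off the corollary directly from the Lagrange expansion provided by \cref{lemma:pd_submatrix}. First I would apply that lemma to $Q^{(k)}$, obtaining
\begin{align*}
 Q^{(k)}[x] = \sum_{i=1}^k D_{ii}\left(x_i + \sum_{j=i+1}^k u_{ij} x_j\right)^2
\end{align*}
for every $x = (x_1 \, \ldots \, x_k)^T \in \R^k$. Since we are assuming $D_{ii} > 0$ for $1 \le i \le k$, each summand is a nonnegative multiple of a square, so $Q^{(k)}[x] \ge 0$ for all $x$; thus $Q^{(k)}$ is at least positive semidefinite. It remains to rule out a nonzero isotropic vector.

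Next I would show $Q^{(k)}[x] = 0$ forces $x = 0$. If $Q^{(k)}[x] = 0$, then since all $D_{ii} > 0$ every bracketed term must vanish:
\begin{align*}
 x_i + \sum_{j=i+1}^k u_{ij} x_j = 0 \quad \text{for } i = 1, \ldots, k.
\end{align*}
Reading these equations from $i = k$ downward (back-substitution): the $i=k$ equation gives $x_k = 0$; substituting into the $i = k-1$ equation gives $x_{k-1} = 0$; and inductively $x_i = 0$ for all $i$. Equivalently, the coefficient matrix of this linear system is $U^{(k)} = (L^{(k)})^T$, which is upper triangular with unit diagonal, hence invertible, so the only solution is $x = 0$. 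Therefore $Q^{(k)}$ is positive definite.

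There is essentially no obstacle here: the only thing to be slightly careful about is that the $u_{ij}$ appearing in the expansion of $Q^{(k)}$ are exactly the entries of the leading $k \times k$ block of $U = L^T$ (which is what \cref{lemma:pd_submatrix} asserts), so that the triangular system above genuinely has unit diagonal and is invertible. Once that is in hand, positive definiteness is immediate. Note also that the converse direction (positive definite $Q^{(k)}$ implies $D_{ii} > 0$ for $i \le k$) is not claimed and need not be addressed.
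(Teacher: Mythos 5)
Your argument is correct and is exactly the intended one: the paper states this corollary without proof as an immediate consequence of \cref{lemma:pd_submatrix}, and the justification is precisely your observation that the expansion is a sum of positive multiples of squares of the components of an invertible (unit upper triangular) linear change of variables. Nothing further is needed.
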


Our idea is now to replace $Q$ by $P^T Q P$ for a permutation matrix $P$, such that
\begin{equation*}
 k := \mathrm{argmin}_i \{ D_{ii} \leq 0 \}
\end{equation*}
 is as large as possible.
This is useful, since if $x_k, \, \ldots, \, x_n$ are assumed to be fixed, we can obtain bounds on $x_1, \, \ldots\,, x_{k-1}$ implied by $Q[x] \leq \lambda$ from the expansion in the same way as in the positive definite case.
We provide an example, to make clear that the ``easy'' coordinates $x_1$, $\ldots$, $x_{k-1}$ can be handled as usual.

\begin{example}\label{ex:ldlt_bounds}
We continue with the matrix $Q$ from \cref{ex:cop} and \cref{ex:ldlt}.
Let us search for vectors $x \in \Z^n_{\geq 0}$ satisfying $Q[x] \leq 2$. \cref{ex:cop} showed that $2$ is the copositive minimum of $Q$.
Setting $x_3 = 0$ to start, we find
\begin{align*}
 2 \geq 3\left(x_1 - \frac{1}{3}x_2\right)^2 + \frac{5}{3}x_2^2.
\end{align*}
This is the exact setting discussed in \cref{sec:svp_fincke}, so we may continue as usual, finding the vector $(0\,1\,0)^T$.
Setting $x_3 = 1$ on the other hand, we find
\begin{align*}
 3 \geq 3 \left(x_1 - \frac{1}{3} x_2 + 1 \right)^2 + \frac{5}{3} x_2^2,
\end{align*}
which allows us to continue in the usual way as well, leading to the vectors $(0\ 0\ 1)^T$ and $(0\,1\,1)^T$.
In \cref{ex:bounds_for_single_diff} we find that these are all shortest vectors by showing that $x_3$ may only take the values $0$ and $1$.
\end{example}
\begin{defi}
 Let $Q = LDL^T$ be an LDLT-decomposition of $Q \in \Sym^n$ and let $k = \mathrm{argmin}_{i} \, \{D_{ii} \leq 0 \}$.
 Then the coordinates $x_k,\, \ldots,\, x_n$ are called \emph{difficult coordinates}.
\end{defi}

A reformulation of the previously stated main idea is to minimize the number of difficult coordinates.
By Sylvester's law of inertia the number of negative (zero) diagonal entries of $D$ is exactly the number of negative (zero) eigenvalues.
This implies that the number number negative and zero eigenvalues is a lower bound for the number of difficult coordinates.
\cref{lemma:pd_submatrix} implies now the following corollary.
\begin{coro}
 The minimal number of difficult coordinates (of any symmetric permutation of $Q$) is the same as the minimal number of rows and columns that are necessary to delete to obtain a positive definite matrix.
\end{coro}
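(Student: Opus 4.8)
The plan is to translate the statement into a combinatorial identity about leading principal submatrices. Fix a symmetric permutation $P^TQP$ together with an LDLT-decomposition $P^TQP = LDL^T$ having a proper diagonal matrix $D$, and set $k = \mathrm{argmin}_i\{D_{ii}\le 0\}$, so that $P^TQP$ has $n-k+1$ difficult and $k-1$ ``easy'' coordinates. The first step is to identify $k-1$ with the size of the largest positive definite leading principal submatrix of $P^TQP$. One direction is exactly the Corollary preceding the statement: $D_{ii}>0$ for $1\le i\le k-1$ forces $(P^TQP)^{(k-1)}$ to be positive definite. For the converse, suppose $(P^TQP)^{(j)}$ is positive definite and let $i_0\le j$ be the smallest index with $D_{i_0 i_0}\le 0$, if it exists; since $U=L^T$ is unit upper triangular, solving $x_{i_0}=1$, $x_i=0$ for $i>i_0$, and $x_i = -\sum_{\ell>i} u_{i\ell}x_\ell$ for $i<i_0$ by back-substitution gives a nonzero $x\in\R^j$ with $(P^TQP)^{(j)}[x]=D_{i_0 i_0}\le 0$ by \cref{lemma:pd_submatrix}, a contradiction; hence $j\le k-1$.

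I would then use that, as $P$ ranges over all permutations, the leading principal submatrices of $P^TQP$ range over all principal submatrices $Q_S$ of $Q$. Put $m^\ast := \max\{|S| : Q_S \text{ is positive definite}\}$. The first step shows $k-1\le m^\ast$ for every admissible $P$, so every symmetric permutation of $Q$ (for which a proper-diagonal LDLT exists) has at least $n-m^\ast$ difficult coordinates. For the reverse inequality, pick $S^\ast$ with $|S^\ast|=m^\ast$ and $Q_{S^\ast}$ positive definite, and let $P$ list the indices of $S^\ast$ first. By the first step the first $m^\ast$ pivots $D_{ii}$ are positive; moreover $D_{m^\ast+1,\,m^\ast+1}\le 0$, for otherwise the leading $(m^\ast+1)\times(m^\ast+1)$ submatrix — which is a principal submatrix $Q_{S^\ast\cup\{j\}}$ of $Q$ — would be positive definite by the Corollary, contradicting maximality of $m^\ast$. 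Thus this permutation realizes exactly $n-m^\ast$ difficult coordinates. Finally, deleting the $n-m^\ast$ indices outside $S^\ast$ turns $Q$ into a positive definite matrix, and no smaller deletion can, again by the definition of $m^\ast$; so the minimal number of difficult coordinates and the minimal number of rows and columns to delete both equal $n-m^\ast$.

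The one delicate point I anticipate is the last construction: we must be able to list a maximal positive definite principal submatrix first and still complete $P^TQP$ to an LDLT-decomposition with a genuinely diagonal $D$. Maximality of $S^\ast$ already forces the trailing Schur complement on the complement of $S^\ast$ to have nonpositive diagonal entries (a positive one would enlarge $S^\ast$), but this does not by itself guarantee existence of a proper-diagonal LDLT — the obstruction being the familiar need for $2\times2$ blocks. One therefore either restricts the claim to matrices admitting a proper-diagonal LDLT after some symmetric permutation, or appeals to the fallback described in \cref{sec:NoBlocks}. Everything else is routine bookkeeping with \cref{lemma:pd_submatrix}; Sylvester's law of inertia, already invoked above, only reconfirms the weaker bound that the number of difficult coordinates is at least the number of non-positive eigenvalues.
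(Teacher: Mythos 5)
Your proof is correct and follows essentially the route the paper intends (the paper offers no written proof beyond ``\cref{lemma:pd_submatrix} implies now the following corollary''): both directions reduce to reading off positive definiteness of leading principal submatrices from the signs of the pivots, with your back-substitution witness supplying the converse that the paper leaves implicit. The delicate point you flag --- that the permutation placing a maximal positive definite principal submatrix first must still admit a proper-diagonal LDLT --- is a genuine implicit hypothesis, but it is one the paper itself assumes throughout \cref{sec:Main} and defers to \cref{sec:NoBlocks}, so your treatment is consistent with the paper's.
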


To conclude, only bounds for the difficult coordinates are necessary.
These bounds do not need to be particularly good, since the bounds of the ``easy'' coordinates can mitigate the bad bounds.
A rather simple idea is to ``box'' the difficult coordinates in:
We calculate for a difficult coordinate $x_k$ the value of (or find a bound for)
\begin{align}
 \max \{x_k : x \in \R_{\geq 0}^n, Q[x] \leq \lambda \}. \label{eq:max_xk}
\end{align}
This bound ignores the other coordinates, so it might not be very good.
Even worse, the optimization problem is not convex and very difficult to solve in general.
In \cref{sec:special_cases} we give some cases in which this is, however, easy to solve.

Geometrically speaking, we intersect the set $\{x \in \R^n : Q[x] \leq \lambda\}$ consecutively with suitable affine subspaces such that the intersection is a (lower-dimensional) ellipsoid, on which we can apply the Fincke-Pohst algorithm again.
See \cref{fig:geometric_idea}.
\begin{figure}[ht]
 \centering
 \includegraphics[width=5cm]{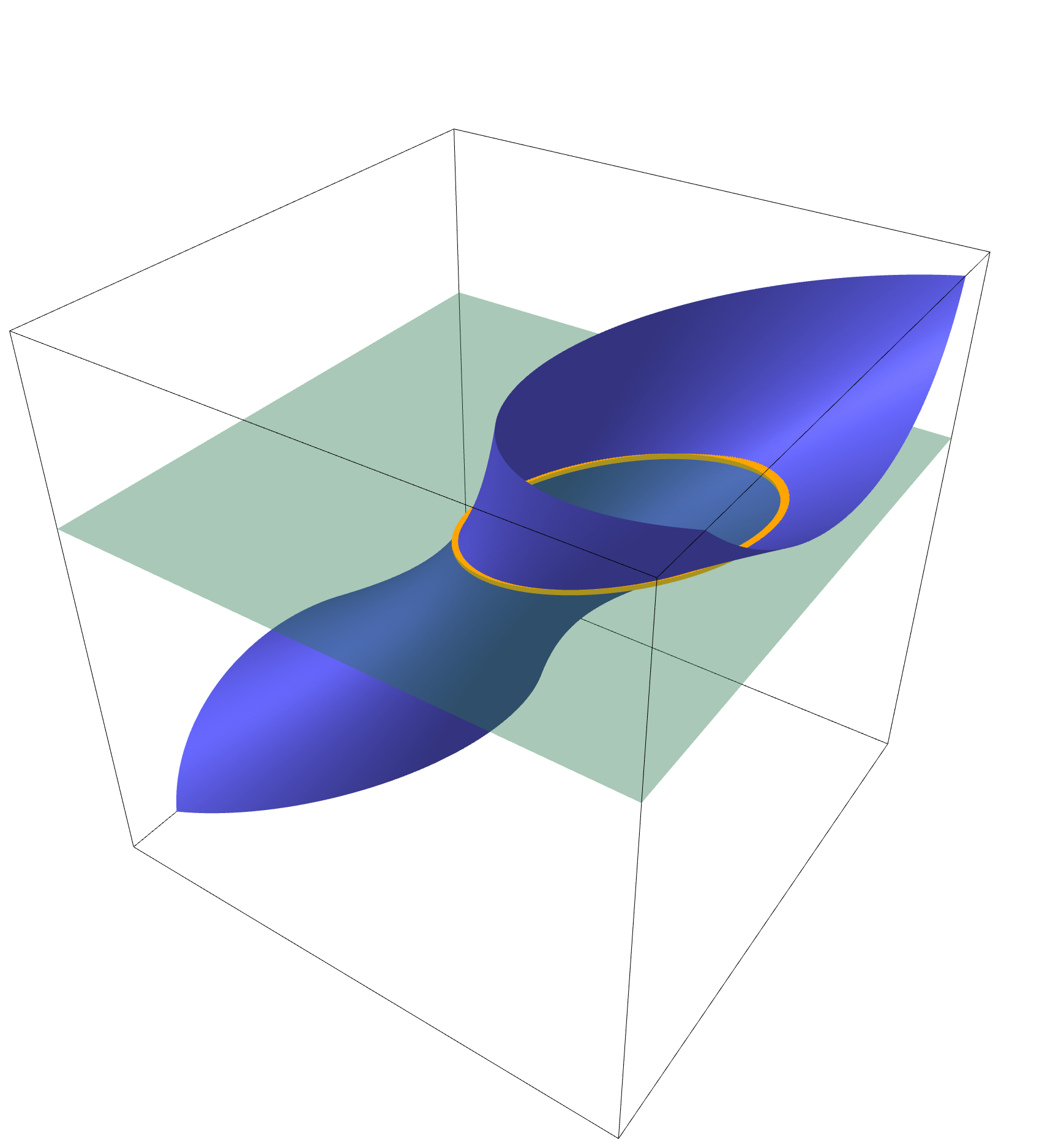}
 \caption{A surface plot of $Q[x] = 2$ with $Q$ from \cref{ex:cop} with the plane $x_3 = 1$.}
 \label{fig:geometric_idea}
\end{figure}

\subsection{Pivot Strategy}
In this subsection we explain how to accomplish the goal formulated in the last subsection heuristically.
Finding the permutation of a suitable symmetric permutation $P^T Q P$ can can be done while performing the LDLT-algorithm by choosing a \emph{pivot element} among the diagonal elements.
With respect to this pivot element the LDLT-step is performed.
In practice, this means applying a transposition so that the chosen diagonal element becomes the new first diagonal element.
Then a regular LDLT-step can be performed as described in \cref{sec:ldlt}.
With the viewpoint of completing the square, as in \cref{eq:completing_square}, the pivot element decides towards which variable we complete the square.

Our heuristic to obtain the permutation is split into two phases.
In the first, we aim to, figuratively speaking, push negative outer coefficients as far right as possible.
Towards this, we always choose the largest positive diagonal element as the pivot.

In the second phase we reorder the earlier ``easy'' coordinates to improve the performance of the algorithm.
This procedure has also already been proposed by Fincke and Pohst (cf. also~\autocite[Sec.~2.7.3]{CohenComputational}):
A reordering is made, with the idea being that a segment $(x_k,\, \ldots\,, x_n)$ should be extendable to a vector $(x_1, \, \ldots\,, x_n)$, since we have a certain amount of vectors in the set
\begin{equation*}
 \{z \in \Z^n_{\geq 0} : Q[z] \leq \lambda \},
\end{equation*}
which need to be traversed in the search tree.
Better extendability means traversing fewer ``useless'' branches of the tree.
In our case the reordering translates to having the (positive) diagonal elements of $D$ be increasing in value.

One way to accomplish this is described next.
We take the LDLT-decomposition
\begin{align*}
 P_1^T Q P_1 = L D L^T \text{ and } (P_1^T Q P_1)[x] = \sum_{i = 1}^n D_{ii} \left(x_i + \sum_{j = i + 1}^n u_{ij} x_j \right)^2
\end{align*}
of (the symmetric permutation of) $Q$.
Then we subtract the summands corresponding to the difficult coordinates.
More precisely, define
\begin{align*}
 R[x] &\coloneqq (P_1^T Q P_1)[x] - \sum_{i = k + 1}^n D_{ii} \left(x_i + \sum_{j = i + 1}^n u_{ij} x_j \right)^2\\
 &= \sum_{i = 1}^{k} D_{ii} \left(x_i + \sum_{j = i + 1}^n u_{ij} x_j \right)^2,
\end{align*}
where $k + 1$ is the smallest difficult coordinate.
The matrix $R$ is given by $L \tilde{D} L^T$, where $\tilde{D}$ arises from $D$ by setting
\begin{align*}
\tilde{D}_{k+1\,k+1} = \ldots = \tilde{D}_{nn} = 0.
\end{align*}
Notice that by its LDLT-decomposition the matrix $R$ is positive semidefinite (and has a kernel of dimension $n-k$).
Furthermore, $R$ is of the form
\begin{equation*}
 \begin{pmatrix}
  \tilde{R} &  \\
   & 0
 \end{pmatrix}
\end{equation*}
with $\tilde{R} \in \Sym^{k}$.
Now we apply the LDLT-algorithm to $R$, where we use the smallest non-zero diagonal element as the pivot element, while we fix the last $k$ coordinates.
This means that we skip the last $n - k$ diagonal elements in the search for the pivot element.
Or, in other terms, we calculate an LDLT-decomposition of $\tilde{R} \in \Sym^{k}$ taking the smallest non-zero diagonal as the pivot element.
This leads to
\begin{align*}
 A := P_2^T R P_2 = L_2 D_2 L_2^T.
\end{align*}
Note that the last $n-k$ diagonal elements of $D_2$ are $0$ by the choice of $P_2$ and $R$.

Overall, it follows that
\begin{align*}
 Q[P_1 P_2 x] = A[x] + \sum_{i = k + 1}^n D_{ii} \left( x_i + \sum_{j = i + 1}^n u_{ij} x_j \right)^2
\end{align*}
is a Lagrange expansion, where the outer coefficients are heuristically positive as long as possible as well as small for the first few coefficients.

\subsection{Avoiding the Need for Block Diagonal LDLT}\label{sec:NoBlocks}
Sometimes an LDLT-decomposition with a proper diagonal matrix $D$ is not possible if at some point no non-zero diagonal entries remain.
The standard example for this case is $\left( \begin{smallmatrix} 0 & 1 \\ 1 & 0 \end{smallmatrix} \right)$.
This matrix (and its multiples) are the only two dimensional examples.
In three dimensions, for $x, y, z \neq 0$ the matrices
\begin{align*}
 \begin{pmatrix}
  0 & x & y \\
  x & 0 & z \\
  y & z & 0
 \end{pmatrix},
 \begin{pmatrix}
  0 & 0 & x \\
  0 & y & 0 \\
  x & 0 & 0
 \end{pmatrix},
 \begin{pmatrix}
  \pm\frac{xy}{z} & x & y \\
  x & \pm \frac{xz}{y} & z \\
  y & z & \pm \frac{yz}{x}
 \end{pmatrix},
 \end{align*}
  where at least one $\pm$ is $-$, and their symmetric permutations are the only examples.
  This is apparent by performing an LDLT-step with every possible pivot element and comparing the resulting $2\times2$ matrix with the two dimensional counterparts.
  Note that none of these matrices are strictly copositive.

  In dimension four we can do a similar procedure, comparing the matrices obtained by doing an LDLT-step to the three dimensional matrices, which do not admit an LDLT-decomposition with a proper diagonal matrix.
  This leads to several systems of polynomial equations.
  In particular, we find matrices which are both strictly copositive and need blocks in their LDLT-decomposition, for example
  \begin{align*}
   \begin{pmatrix}
    8 & -2 & -8 & 0 \\
    -2 & 1 & 0 & 8 \\
    -8 & 0 & 24 & 16 \\
    0 & 8 & 16 & 32
   \end{pmatrix}.
  \end{align*}
  This indicates that we have to take special care of such matrices.

  A simple idea to deal with such matrices is to split off a small nonnegative part, i.e. writing
  \begin{align*}
   Q = \tilde{Q} + N,
  \end{align*}
  with $N \in \NonN^n$ such that $\tilde{Q}$ can be handled as usual.
  Note that this can be always accomplished if $Q$ is strictly copositive.
  The examples in two and three dimensions as well as our calculations for the four dimensional case (which is omitted here) indicate that such matrices are very specific.
  Thus by splitting something off, one should expect to obtain a matrix which can be handled without block matrices.
  Since $Q$ is strictly copositive the diagonal elements of $Q$ are a natural choice to take from.
  In \cref{sec:spn_case} we describe how to find the necessary bounds in the case of ``split'' matrices $\tilde{Q} + N$.

  To close this section, we list some existence results for LDLT-decompositions without $2\times2$ blocks.
  From the Cholesky-decomposition it follows that all positive (and negative) semidefinite matrices possess such decompositions.
  More generally, every symmetric permutation of a \emph{quasidefinite matrix} possesses one as well \autocite{Quasidefinite}, i.e. every matrix of the form
  \begin{equation*}
   \begin{pmatrix}
    -E & A^T \\
    A & F
   \end{pmatrix},
  \end{equation*}
  where $E$ and $F$ are positive definite matrices.
  The existence of the decomposition $Q = LDL^T$ (without permutations) is known to be equivalent to the existence of an LU-factorization (cf. \autocite[Sec.~4.1.2]{MatrixComputations}).
  Necessary and sufficient conditions for that were given by Okunev and Johnson \autocite{okunev2005necessarysufficientconditionsexistence}.

\section{Special Cases}\label{sec:special_cases}
\subsection{One Difficult Coordinate}\label{sec:one_difficult_coord}
This special case can be readily solved by additionally solving just one convex quadratic optimization problem to obtain bounds for the difficult coordinate.
To this end we partition
\begin{align*}
 Q = \begin{pmatrix}
      Q^{(n-1)} & q \\
      q^T & Q_{nn}
     \end{pmatrix} \text{ and }
     x =
     \begin{pmatrix}
      \hat{x} \\
      x_n
     \end{pmatrix}
\end{align*}
with $\hat{x} \in \R^{n-1}$ and $ Q^{(n-1)} \in \Sym^{n-1}$.
If we fix $x_n$, the function
\begin{align*}
 Q[x] = Q^{(n)}[\hat{x}] + 2 x_n q^T \hat{x} + Q_{nn} x_n^2
\end{align*}
is a convex quadratic function in $\hat{x}$.
Indeed, $Q^{(n - 1)}$ is positive definite, since $x_n$ is the only difficult coordinate.

We can obtain bounds then as follows.
Define
\begin{align*}
 f(s) := \min \left\{ Q[x] : x \in \R_{\geq 0}^n,\, x_n = s \right\} \negmedspace.
\end{align*}
Note that for $z \in \Z^n_{\geq 0}$, if $f(z_n) > \lambda$, then $Q[z] > \lambda$ as well.
Thus for $Q[z] \leq \lambda$ to hold, $f(z_n) \leq\lambda$ is a necessary condition.
This observation allows us to bound the difficult coordinate, as the map $f$ is very well-behaved.
\begin{lemma}
 For $s \geq 0$ we have $f(s) = s^2 f(1)$.
\end{lemma}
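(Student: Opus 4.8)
The plan is to exploit the homogeneity of the quadratic form $Q$ under scaling, combined with the fact that the constraint set $\{x \in \R^n_{\geq 0} : x_n = s\}$ scales linearly. First I would treat the case $s > 0$: if $x$ ranges over $\{x \in \R^n_{\geq 0} : x_n = s\}$, then writing $x = s y$ with $y$ ranging over $\{y \in \R^n_{\geq 0} : y_n = 1\}$ gives a bijection between the two feasible sets (since $s > 0$ preserves nonnegativity). Because $Q[sy] = s^2 Q[y]$, the minimum over the first set equals $s^2$ times the minimum over the second, i.e. $f(s) = s^2 f(1)$.

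I would be a little careful about whether the minimum defining $f(s)$ is actually attained, since $f(1) = \min\{Q[x] : x \in \R^n_{\geq 0}, x_n = 1\}$ is a minimum of a convex quadratic over a closed (but unbounded) polyhedron, which is only guaranteed finite and attained because $Q^{(n-1)}$ is positive definite — this is exactly the observation already made in the text just before the lemma (the coercivity in $\hat x$ that comes from $x_n$ being the only difficult coordinate). So the substitution argument is really a statement about two optimization problems that are both well-posed, and the scaling bijection identifies their optimal values. The remaining case $s = 0$ needs a separate (trivial) remark: $f(0) = \min\{Q[x] : x \in \R^n_{\geq 0}, x_n = 0\} = \min_{\hat x \ge 0} Q^{(n-1)}[\hat x] = 0$, attained at $\hat x = 0$, which agrees with $0^2 f(1) = 0$.

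The only genuine obstacle I anticipate is the degenerate behavior at $s = 0$ versus $s > 0$: one cannot directly substitute $x = sy$ when $s = 0$, so the clean one-line scaling argument only covers $s > 0$ and the boundary value must be handled by hand as above. Everything else is routine: the map $x \mapsto sy$ is a bijection of the relevant feasible sets precisely because $s \ge 0$ keeps us inside the nonnegative orthant, and $Q[\cdot]$ is positively homogeneous of degree $2$. I would present it in roughly three sentences — set up the bijection for $s>0$, invoke $Q[sy]=s^2Q[y]$, then dispatch $s=0$ directly.
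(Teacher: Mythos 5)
Your proposal is correct and matches the paper's proof: both handle $s=0$ by noting $f(0)=0$ directly, and for $s>0$ both use the substitution $x = sy$ (equivalently $x_i \mapsto x_i/s$) together with the degree-$2$ homogeneity of $Q$ to get $f(s) = s^2 f(1)$. Your additional remarks on attainment of the minimum are sound but not needed beyond what the paper already establishes before the lemma.
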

\begin{proof}
 Clearly, $f(0) = 0$, so assume $s > 0$.
 Then
 \begin{align*}
  f(s) &= \min \left\{ Q[ ( x_1 \, \ldots \, x_{n-1} \, s )^T] : x_1,\ldots, x_{n-1} \geq 0 \right\} \\
  &= s^2 \min \left\{ Q\left[ \left( \frac{x_1}{s} \, \ldots \, \frac{x_{n-1}}{s} \medspace 1 \right) \right] : x_1,\ldots, x_{n-1} \geq 0 \right\} = s^2 f(1). \qedhere
 \end{align*}
\end{proof}
The value $f(1)$ can be calculated by solving a single convex quadratic minimization problem, which can be done efficiently using different methods, see e.g.~\autocite{Boyd_Vandenberghe_2004}.
The difficult coordinate $x_n$ may then be bounded as $0 \leq x_n \leq \sqrt{\frac{\lambda}{f(1)}}$.

Note that this technique may also be applied to positive semidefinite matrices~$Q$:
We fix a difficult coordinate $x_k = 1$, calculate the minimum (which is easy since the corresponding submatrix is positive semidefinite as well!), and finally obtain the bound analogously.

To close this section, we continue with the Examples \ref{ex:cop}, \ref{ex:ldlt}, and \ref{ex:ldlt_bounds}.
\begin{example}\label{ex:bounds_for_single_diff}
 To find bounds on $x_3$ we minimize
 \begin{align*}
  Q[(x_1\, x_2\, 1)^T] = 3x_1^2 - 2x_1x_2 + 6x_1 + 2x_2^2 -2x_2 + 2
 \end{align*}
 over $\R^2_{\geq 0}$.
 The optimal solution is $(0,\frac{1}{2})$ with the objective value $f(1) = \frac{3}{2}$.
 Taking $\lambda = 2$ then yields $0 \leq x_3 \leq \sqrt{\frac{4}{3}}$, so only the cases $x_3 = 0$ and $x_3 = 1$ have to be considered.
\end{example}

\subsection{Matrices in SPN}\label{sec:spn_case}
For this section we assume $Q \in \SPN^n$, e.g. $Q = S + N$ with $S \in \Sym_{\succcurlyeq 0}^n$ and $N \in \NonN^n$.
Suppose now that we have fixed $x_{k+1}, \ldots, x_n$ already and that we want to bound $x_k$ next.
A simple way to find bounds for the difficult coordinates is to lower bound $Q[x]$ by $S[x]$ and apply the ideas of \cref{sec:one_difficult_coord} to the positive semidefinite matrix $S$.
This is what we currently do.

Another idea is as follows.
We can compute an LDLT-decomposition $S = LDL^T$ for $S$ as well (even without permutations since it can be derived from the Cholesky-decomposition of $S$), so that from the quadratic inequality
\begin{align*}
 \lambda \overset{!}{\geq} Q[x] \geq \sum_{i = k}^n D_{ii} \left(x_i + \sum_{j = i + 1}^n u_{ij} x_j\right)^2 + \sum_{i = k}^n \sum_{j = k}^n N_{ij} x_i x_j
\end{align*}
(in $x_k$) we can find bounds for the difficult coordinate $x_k$.
However, if the rank of $S$ is low and (some of) the coefficients of $N$ are also small, then the inequality may produce inacceptably large bounds.
This seems to be not uncommon, which is why we prefer the first approach.
A combination, depending on the given matrix might be advantageous.

It is usually not clear whether a given strictly copositive matrix is in $\SPN^n$.
This can be tested, however, efficiently by a semidefinite program:
The matrix $Q$ is in $\SPN^n$ if and only if there is nonnegative matrix $N$ such that $Q - N \succcurlyeq 0$.
Note that $N \in \NonN^n$ is a linear condition in $\Sym^n$.
Thus by solving the problem
\begin{align*}
 \sup &\left\langle O, N \right\rangle \\
 \mkern8mu \text{s.t. }  &N \in \NonN^n \\
 \mkern30mu &Q - N \succcurlyeq 0
\end{align*}
where $O \in \Sym^n$ is a suitable objective, we can find such a decomposition if it exists.
For the objective $O$ we propose the matrix made up entirely of $-1$ entries.
The effect we wish to achieve with this objective is that the nonnegative part $N$ has only a small influence, so that bounding only via $S$ is not too bad.

Since the copositive cone and $\SPN^n$ coincide for $n \leq 4$, our new method is always applicable for $n \leq 4$.
In general, this method works for all non-exceptional copositive matrices.

\section{Numerical Results}\label{sec:appl_and_num_res}

We implemented the algorithm with the two special cases to bound the difficult coordinates.
It is a preliminary implementation using Python \autocite{Python}, numpy \autocite{harris2020array}, Gurobi \autocite{gurobi}, cvxpy \autocite{diamond2016cvxpy, agrawal2018rewriting}, CVXOPT \autocite{cvxopt}, and OSQP \autocite{osqp}.
The solution described in \cref{sec:NoBlocks} is not yet implemented.
Note that our implementation (the bound calculation specifically) relies currently on non-rigorous floating point arithmetic.
The code and the matrices used for testing are publicly available \autocite{MinCOP_LDLT}.

We have also tested our algorithms on multiple classes of matrices in dimensions three to eight, with $15$ matrices for each class and dimension.
The classes are positive semidefinite matrices, $\SPN$-matrices, $\SPN$-matrices with at least two negative eigenvalues, and, lastly, perfect copositive matrices.
The matrices of the last class have been manually sampled in the early stages of the certificate algorithm of \autocite{CertificateAlgo}.
The other matrices have been randomly sampled as $B B^T$ from Sage's \autocite{sagemath} random\_matrix.
We recorded the runtime and applicability of our algorithms.
For comparison we also tested the implementation of the algorithm using a simplicial partition, introduced in \autocite{CertificateAlgo}.
In particular, we used \autocite{Mathieu}, compiled on July 25, 2025.
If the execution time, however, exceeded two hours for a single matrix, we aborted the calculation and used two hours as the recorded time.
We also recorded the execution time of our algorithms from the moment the necessary libraries (e.g. numpy, gurobipy, cvxpy) were loaded, since in many examples, these loading times are significantly longer than the actual code execution time and would not be present in a more thorough implementation.
The tests were done on a local machine with an Intel Core i7-1185G7 with 32GiB of RAM running Ubuntu 24.04.2.
In \cref{fig:results} the results are displayed.

It is clear that our new methods offer a tremendous speedup in dimensions five and above, if they are applicable.
They also appear to be a lot more consistent and predictable in their runtime in comparison to the method on simplicial subdivisions.
Furthermore, they appear to scale better to higher dimensions.
\begin{figure}[ht]
 \begin{subfigure}{0.48\textwidth}
  \includegraphics[width=\textwidth]{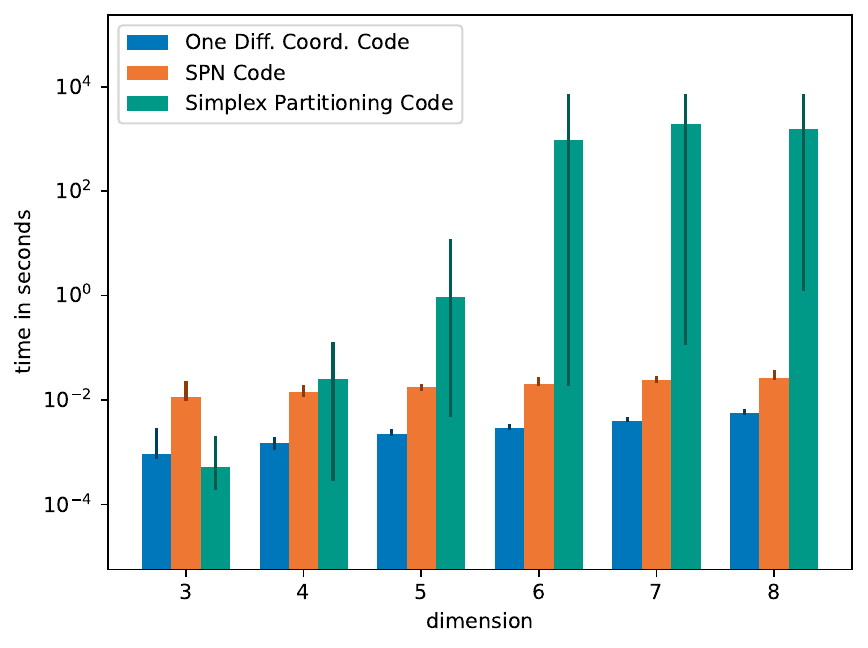}
  \caption{Test on positive semidefinite matrices. All examples could be solved by our algorithms.}
 \end{subfigure}
 \quad
 \begin{subfigure}{0.48\textwidth}
  \includegraphics[width=\textwidth]{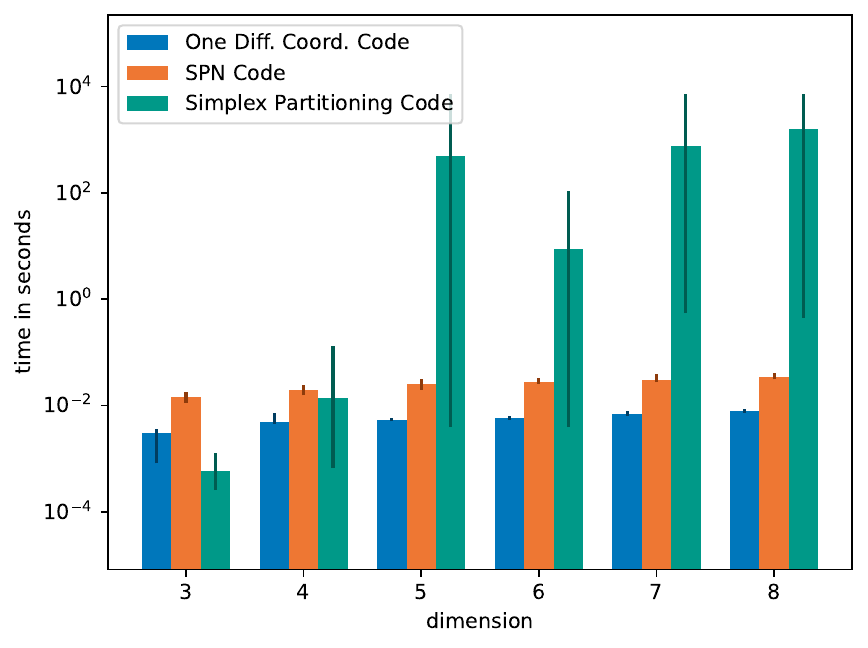}
  \caption{Test on $\SPN$ matrices. The one difficult coordinate code was not applicable for 2 of the 90 examples.}
 \end{subfigure}
 \begin{subfigure}{0.48\textwidth}
  \includegraphics[width=\textwidth]{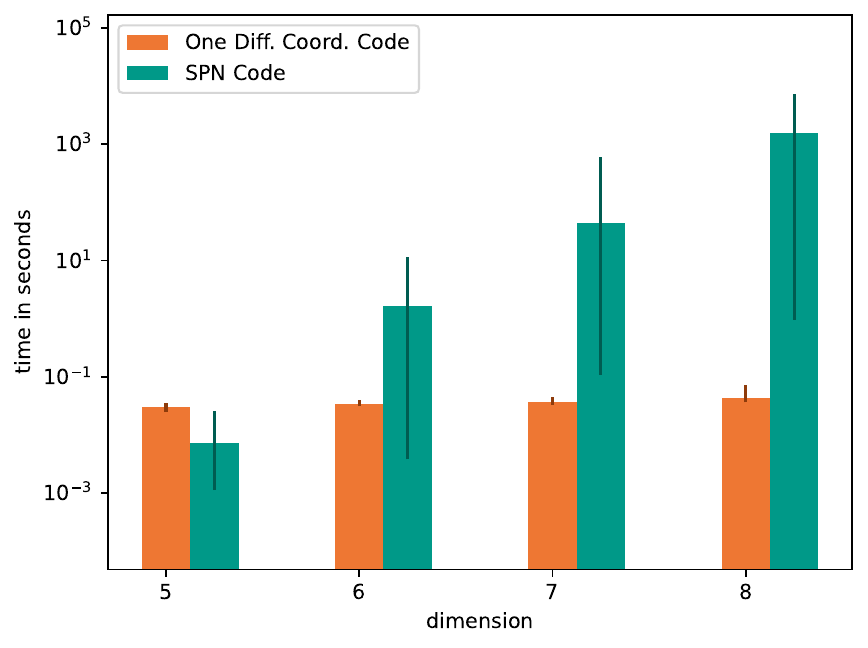}
  \caption{Test on $\SPN$ matrices with at least two negative eigenvalues. The one difficult coordinate code was not applicable here.}
 \end{subfigure}
 \quad
 \begin{subfigure}{0.48\textwidth}
  \includegraphics[width=\textwidth]{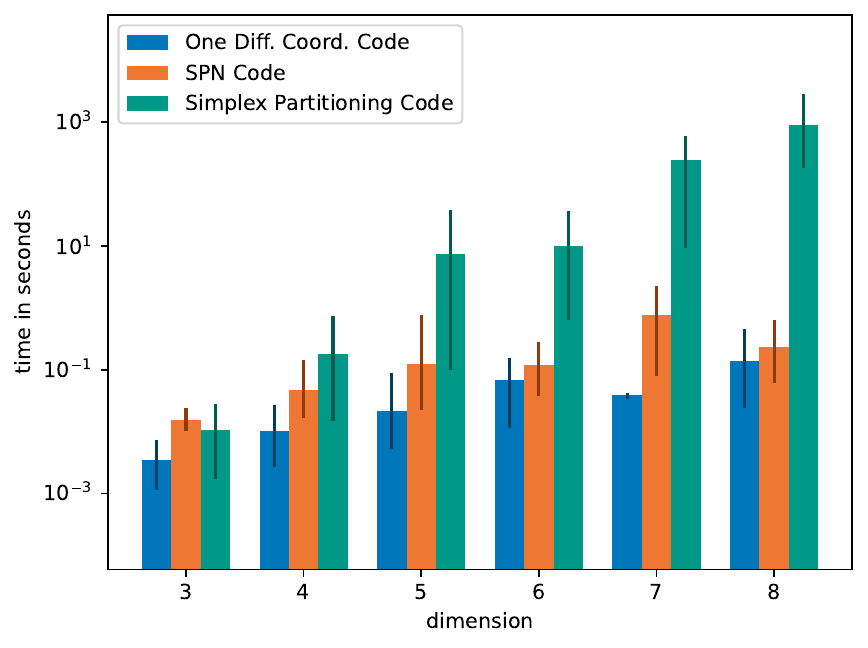}
  \caption{Test on perfect copositive matrices. The one difficult coordinate code was not applicable for 22 of the 90 examples.}
 \end{subfigure}
 \caption{Runtime of the algorithms on a logarithmic timescale. The broad bar indicates the average runtime of the algorithm on the 15 examples of the dimension, the lower end of the small bar indicates the minimum runtime of the algorithm on the 15 examples of that dimension, and the upper end of the small bar indicates the maximum runtime of the algorithm on the 15 examples of that dimension.
 Overall, the code for $\SPN$-matrices was always applicable, while the code for one difficult coordinate matrices was not applicable for 84 out of 359 examples.}
 \label{fig:results}
\end{figure}

\section{Open Questions and Further Directions}\label{sec:open_questions}
In this final section we discuss some open questions and possible further research directions.

\subsection{Extensions to All Copositive Matrices}
\cref{sec:special_cases} was largely concerned with finding good approximations for \cref{eq:max_xk}.
If one had a decent and reliable way to find such upper bounds, the approach using an LDLT-decomposition would work for all input matrices in general.
However, due to the non-convex nature of the problem, this has turned out to be very difficult.
For example, the standard Shor (SDP) relaxation turns out to be unbounded.

A promising approach seems to be to relax the problem
\begin{align*}
 \max \left\{ x_k^2 : x \geq 0, \, Q[x] \leq 1 \right\}
\end{align*}
to
\begin{align*}
  &\max \left\{ \left\langle O, x x^T \right\rangle : x\geq 0, \left\langle Q, x x^T \right\rangle \leq 1 \right\} \\
 \leq &\sup \left\{ \left\langle O, Y \right\rangle : Y \in \CP^n, \, \left\langle Q, Y \right\rangle \leq 1  \right\},
\end{align*}
where $O$ is a matrix everywhere zero except at $O_{kk} = 1$.
This is a in general NP-hard problem and solving it exactly becomes quickly intractable.
A sufficient relaxation might, however, be easier to find here.

A second approach is to combine our $\SPN$ special case with the simplicial partition used in \autocite{CertificateAlgo}.
Some preliminary tests indicate that the number of simplicial cones necessary for our method to become applicable is generally quite low.
Further, one could replace the semidefinite problems we are solving with the approach of \autocite{KaplanCopositivityProbe} to calculate the $\SPN$ decomposition.

\subsection{Reduction Theory}\label{sec:reduction}
One of the highlights of the classical theory of quadratic forms is its reduction theory.
Likewise, the much celebrated \emph{LLL-algorithm} \autocite{Lenstra1982} is a lattice algorithm, which allows, for instance, for a $2^n$ approximation to SVP in polynomial time.
This algorithm is indispensable in the computational aspects of lattice theory and a significant question is if something similar can be done in the copositive situation as well.
This would allow for efficient preprocessing, similar to what is done for the shortest vector problem.

The major hurdle is that by replacing $Q \in \interior \COP^n$ with $U^T Q U$ for an unimodular matrix $U$, we also change the cone in which we search for the shortest vectors.
In particular, we have to search for integer vectors in the polyhedral cone
\begin{align*}
 C := \{x \in \R^n : Ux \geq 0\}.
\end{align*}
The problem of having a different cone already appears in \autocite{CertificateAlgo} and is solved there by an application of the Hermite normal form.
By the unimodularity of $U$ this is in our case the same as reversing the reduction.

Another drawback is that it is not clear how to apply the important special case of \cref{sec:spn_case}.

Special care has also be put into how the reduction is done.
The idea to split the quadratic form $Q$ into a positive and negative part, so that $Q = Q^+ - Q^-$, and applying the LLL-algorithm to $Q^+ + Q^-$ seems to increase the number of difficult coordinates.
A more promising approach is the use of an adapted \emph{indefinite-LLL-algorithm} \autocite{SimonIndefiniteLLL}, which works for indefinite quadratic forms as well by adjusting the so called Lov\'asz condition to disallow negative numbers.

\subsection{Special Algorithms for Perfect Copositive Matrices}
With a view towards the application of finding $\CP$-certificates, it makes sense to consider special algorithms for perfect copositive matrices.
We recall that we search systematically for perfect copositive matrices by traversing the neighborhood graph of the copositive Ryshkov polyhedron and that two contiguous perfect copositive matrices share a very large portion of their minimal vectors.
This very strong property has so far been unused and might offer significant speedups in the certificate algorithm of \autocite{CertificateAlgo}, if leveraged correctly.

Moreover, these speedups may also be beneficial for other applications.
One quite promising idea is that we can find polyhedral approximations for the complicated cone $\CP^n$ using perfect copositive matrices:
Taking a finite set $\mathcal{P}$ of perfect copositive matrices, we have an outer polyhedral approximation
\begin{equation*}
 \CP^n \subset \left\{ Q \in \NonN^n : \langle Q, P \rangle \geq 0 \text{ for all } P \in \mathcal{P} \right\}.
\end{equation*}
Using the Voronoi cone for each perfect copositive matrix of $\mathcal{P}$, we also have an inner polyhedral approximation
\begin{equation*}
 \cone \left\{ x x^T : x \in \MinCOP P \text{ for } P \in \mathcal{P} \right\} \subset \CP^n.
\end{equation*}
Both of these polyhedral approximations can be arbitrary close to $\CP^n$ (cf. \autocite{PerfectCopositive}).
Note that by duality we also get arbitrarily close linear approximations to $\COP^n$.
Since the certificate algorithm produces perfect copositive matrices as a byproduct, we may also use it to systematically generate such polyhedral approximations.
These approximations have not been investigated yet.
If they can be calculated sufficiently fast, they may prove very useful for solving or bounding copositive optimization problems.

\section*{Acknowledgement}
We like to thank Wessel van Woerden for helpful discussions on the complexity of the problem.
We also like to thank Thomas Kalinowski and Matthias Schymura for helpful comments on the manuscript.
Both authors gratefully acknowledge support by the German Research Foundation (DFG)
under grant SCHU 1503/8-1.

\printbibliography[keyword={ref}]
\printbibliography[keyword={software}, title={Software}]
\end{document}